\newcommand{\4}{\#}
\newcommand{\eq}{\ensuremath{\stackrel{4}{=}}}
\newcommand{\eqq}{\ensuremath{\stackrel{2}{=}}}
\newcommand{\bm}[1]{\mbox{\boldmath $#1$}}
\newcommand{\ul}[1]{\underline{#1}}
\theoremstyle{plain}
\newtheorem{thm}{Theorem}
\newtheorem{cor}[thm]{Corollary}
\newtheorem{lem}[thm]{Lemma}
\theoremstyle{definition}
\newtheorem*{defn}{Definition}
\theoremstyle{remark}
\newtheorem{rem}[thm]{Remark}
\title{Domino Tiling Congruence Modulo 4}
\author{Bridget Eileen Tenner}
\address{Department of Mathematical Sciences, DePaul University, Chicago, Illinois}
\email{bridget@math.depaul.edu}
\begin{document}

\begin{abstract}
The number of domino tilings of a region with reflective symmetry across a line is combinatorially shown to depend on the number of domino tilings of particular subregions, modulo $4$.  This expands upon previous congruency results for domino tilings, modulo $2$, and leads to a variety of corollaries, including that the number of domino tilings of a $k \times 2k$ rectangle is congruent to $1 \bmod 4$.
\end{abstract}

\maketitle

\section{Introduction}

Much work has been done to enumerate the domino tilings of various 
regions, and related the number of perfect matchings of subgraphs of $\mathbb{Z}^2$.  When an exact enumeration has been elusive, or when considering more general types of regions, various aspects of the number of domino tilings have been studied.  For example, in \cite{pachter}, Pachter studied 
$2$-divisibility in tilings, and Cohn examined the number of domino tilings of a square under the $2$-adic metric in \cite{cohn}.  It was recently shown by this author in \cite{oddity} that the parity of the number of domino tilings of a region and of a subregion will be equal under certain conditions.  This parity result leads to the question of whether there may be other moduli for which the number of domino tilings of a region and of a subregion will have the same residue.  This article shows that for regions with reflective symmetry, there is such a result for the modulus $4$.  Several of the proofs in this article are reminiscent of those in \cite{oddity}, but the fourfold nature of this topic requires additional sophistication and technical aspects.

In \cite{propp}, Propp suggested finding a combinatorial argument for the 
fact that the number of domino tilings of a $k \times 2k$ rectangle is 
congruent to $1 \bmod 4$.  There are analytic methods for enumerating 
the domino tilings of a rectangle, such as Kasteleyn's formula (see 
\cite{kasteleyn}).  For example, if $a$ and $b$ are both even, this 
formula states that the number of domino tilings of an $a \times b$ 
rectangle is

\begin{equation*}
\prod_{i=1}^{a/2} \prod_{j=1}^{b/2} \left(4\cos^2 \frac{i\pi}{a+1} + 4\cos^2 \frac{j\pi}{b+1}\right).
\end{equation*}

\noindent However, in general, the residue of this number, modulo $4$, is not easy to compute analytically, and so a combinatorial method is desirable.

This paper shows combinatorially that the residue, modulo $4$, of the 
number of domino tilings of a region with reflective symmetry can be 
computed by looking at the analogous residue for certain subregions.  
This will, among other things, answer Propp's question by showing 
inductively that the number of domino tilings of a $k \times 2k$ rectangle 
is $1 \bmod 4$.  This is not the first time that domino tilings for 
regions with reflective symmetry have been examined.  In \cite{ciucu}, 
Ciucu studies the number of perfect matchings of a graph with reflective 
symmetry (the dual of such an object is a domino tiling of a region with 
reflective symmetry), and derives a factorization theorem for this number 
in the case where the axis of symmetry separates the graph.  However, this 
does not answer the question for the $k \times 2k$ rectangle: one symmetry 
axis does not separate the graph, and the other will separate only if $k$ 
is odd, but the domino tilings of the subgraphs required in the 
factorization theorem would themselves have to be enumerated as a separate 
problem.  In this paper, a region with a local property and a reflective 
axis of symmetry, 
whether or not it is separating, is shown to have the 
same number of domino tilings, modulo $4$, as particular subregions.  
This is similar to Ciucu's work, in that it considers symmetric regions, 
but it is applicable in different circumstances, and describes a different 
aspect of this enumeration.  It should also be emphasized that besides the 
symmetry and local property, no further assumptions are made about the 
region.

The main terminology and notation used throughout the paper will be 
introduced in Section~\ref{sec:defn}.  This section will also present the 
parity theorem of \cite{oddity}, which is the precursor to the main result 
in this paper, and which plays a prominent role in the proof of that 
result.  Section~\ref{sec:4ity} consists entirely of the main result of 
this article, together with its proof.  The arguments in this proof are inductive and resemble those of the 
aforementioned parity theorem.  Finally, Section~\ref{sec:apps} discusses 
a variety of applications of Theorem~\ref{thm:4ity theorem}, one of which 
resolves Propp's question about the $k \times 2k$ rectangle.

\section{Definitions and notation}\label{sec:defn}

The main terminology of the paper is described here.  Some of these definitions and notation can be cumbersome, however, as in \cite{oddity}, they permit the results to be as general as possible.  Much of this terminology is also used in \cite{oddity}, although the class of regions considered there is broader, and hence the result is not as specialized.  The primary difference between the regions considered in \cite{oddity} and the regions considered here is the additional requirement of reflective symmetry.

\begin{defn}
A \emph{region} is the dual of a finite connected induced subgraph of $\mathbb{Z}^2$.
\end{defn}

\begin{defn}
The number of domino tilings of a region $R$ is denoted $\#R$.
\end{defn}

The notation $a \stackrel{m}{=} b$ will indicate that $a \equiv b \bmod m$.

A basic lemma, which is key in the proof of the main theorem, is the following.  The proof is straightforward, and thus is omitted here.

\begin{lem}\label{lem:mod4mod2}
For $r \in \mathbb{Z}$, set $r_m := r \ (\bmod \ m)$ to be an integer in 
the set $[0,m-1]$.  Then for all $r$,
\begin{equation*}
2r_4 \eq 2r_2.
\end{equation*}
\end{lem}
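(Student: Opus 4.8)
The plan is to observe that, although $r_4$ and $r_2$ need not be equal as integers, they always agree modulo $2$, and that multiplying by $2$ promotes such a mod-$2$ agreement into a mod-$4$ agreement. This is the whole content of the lemma.

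First I would unwind the definitions. By construction $r_4 \eq r$, and since any congruence modulo $4$ is in particular a congruence modulo $2$, this forces $r_4 \eqq r$. Likewise $r_2 \eqq r$ directly from the definition of $r_2$. Chaining these two congruences gives $r_4 \eqq r_2$, so the difference $r_4 - r_2$ is an even integer; write $r_4 - r_2 = 2k$ with $k \in \mathbb{Z}$.

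The second and final step is simply to scale by $2$: multiplying $r_4 - r_2 = 2k$ by $2$ yields $2r_4 - 2r_2 = 4k$, which says precisely that $2r_4 \eq 2r_2$, as claimed.

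There is no genuine obstacle here; the argument is immediate once one notices the mod-$2$ agreement of $r_4$ and $r_2$. The only conceptual point worth flagging is that doubling effectively converts congruence modulo $2$ into congruence modulo $4$, which is exactly the mechanism exploited later when passing between the mod-$2$ parity theorem and the mod-$4$ main result. As a sanity check one could instead verify the four residue classes directly: the cases $r_4 \in \{0,1\}$ are trivial since then $r_4 = r_2$, while for $r_4 = 2$ we have $2r_4 = 4 \eq 0 = 2r_2$ and for $r_4 = 3$ we have $2r_4 = 6 \eq 2 = 2r_2$. The two-line argument above, however, handles all $r$ uniformly and is the version I would record.
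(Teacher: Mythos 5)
Your proof is correct: the observation that $r_4 \eqq r \eqq r_2$, so that $r_4 - r_2$ is even and hence $2(r_4 - r_2)$ is divisible by $4$, fully establishes the lemma. The paper itself omits the proof as ``straightforward,'' so there is nothing to compare against; your argument (and your remark that doubling upgrades mod-$2$ agreement to mod-$4$ agreement, which is precisely how the lemma is deployed in the proof of Theorem~\ref{thm:4ity theorem}) is exactly the intended content.
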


All tilings discussed in this article are domino tilings, thus the word ``domino'' may be omitted.  As in \cite{oddity}, when the configuration of only part of a region is under consideration, a drawing of the region may only include this portion, while the undrawn part of the region is arbitrary.  Shading is used to indicate when part of a region has been removed.

\begin{defn}
A region $R$ has an \emph{$(\{s,t\}; 1)$-corner} if there is a convex corner in $R$ where the segments bounding this corner have lengths $s$ and $t$.  For $p > 1$ and $\min\{s,t\} \ge 2$, an \emph{$(\{s,t\}; p)$-corner} is a $(\{1,s\};1)$-corner, a $(\{1,t\};1)$-corner, and $p-2$ distinct $(\{1,1\};1)$-corners configured as in Figure~\ref{figure:stp}.
\end{defn}

\begin{figure}[htbp]
\centering
\input{stp.pstex_t}
\caption{An $(\{s,t\};4)$-corner.}\label{figure:stp}
\end{figure}

\begin{defn}
If the segment of length $s$ in an $(\{s,t\};p)$-corner forms an $(\{s,t'\}; p')$-corner at its other endpoint, then each of these corners is \emph{walled} at $s$.
\end{defn}

\begin{defn}
An \emph{$(\{i,j\};p)$-strip} is a subregion of $i+j+2p-3$ squares that has an $(\{i,j\};p)$-corner.
\end{defn}

The local property required for the main theorem, Theorem~\ref{thm:4ity theorem}, states that a particular subregion avoids holes in certain places.  A notion of completion describes this property, made precise below.  To simplify things, one could consider only regions without holes, although this would ignore a large class of regions for which the results are also true.

\begin{defn}
An $(\{s,t\};p)$-corner in a region $R$ is \emph{$2$-complete} if $2 \le \min\{s,t\}$.  For $2 < i \le \min\{s,t\}$, the corner is \emph{$i$-complete} if the following conditions are met:

\begin{enumerate}
\item Let $C$ be the $(\{i,i\}; p)$-strip in the $(\{s,t\};p)$-corner.  Let $x$ and $y$ be the two squares adjacent to the ends of $C$ but not along the edges forming the $(\{s,t\};p)$-corner.  If either $x$ or $y$ is in $R$, then the $(\{i-1,i-1\};p)$-strip between $x$ and $y$, inclusively, all of whose squares are adjacent to $C$, must also be a subregion of $R$.
\begin{equation*}
\input{completestp.pstex_t}
\end{equation*}
\item Consider removing $C$ from $R$.  This leaves some $(\{s',t'\};p)$-corner in the resulting subregion.  If $2 \le i-2 \le \min\{s',t'\}$, then this corner must be $j$-complete for $j = 2, \ldots, i-2$.
\end{enumerate}

\end{defn}

To decide if an $(\{s,t\};p)$-corner is $k$-complete, the largest potential subregion of $R$ that must be examined is depicted in Figure~\ref{fig:completeness subregion}, where there are $\lceil k/2 \rceil$ strips: one $(\{k,k\};p)$-strip, one $(\{k-1,k-1\};p)$-strip, one $(\{k-3,k-3\};p)$-strip, one $(\{k-5,k-5\};p)$-strip, $\ldots$, concluding with a $(\{3,3\};p)$-strip if $k$ is even, or a $(\{2,2\};p)$-strip if $k$ is odd.
\begin{figure}[htbp]
\epsfig{file=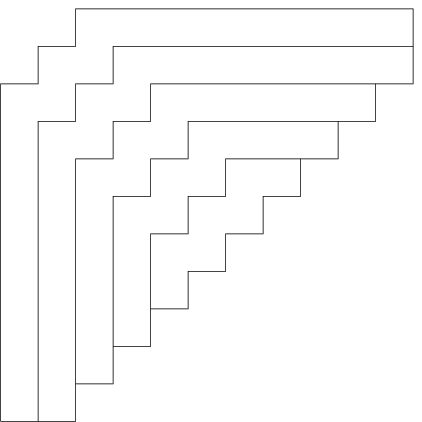}
\caption{The largest subregion examined when determining if a 
$(\{s,t\};3)$-corner is $9$-complete.} \label{fig:completeness subregion}
\end{figure}

\begin{defn}
For $2 \le k \le \min\{s,t\}$, an $(\{s,t\};p)$-corner is \emph{complete up to $k$} if that corner is $i$-complete for $i = 2, \ldots, k$.
\end{defn}

Examples of corners that are complete up to $3$ and $4$ can be found in \cite{oddity}, and are not repeated here.

The main result of \cite{oddity}, and a useful tool in the proof of the main theorem of this paper, is restated below in Theorem~\ref{thm:parity theorem}.

Throughout this article, if a region is depicted having a shaded subregion (as in equation~\eqref{eqn:2-open}), then that shading is understood to indicate a subtiling.  The figures in Theorems~\ref{thm:parity theorem} and~\ref{thm:4ity theorem} depict $(\{s,t\};p)$-corners with $p=4$ to indicate the general result.  When $p=1$, the figures in equation~\eqref{eqn:2-open}, for example, would each consist of a single right-angle and a shaded $(\{k,k+1\};1)$-strip (once with each orientation).

\begin{thm}[\cite{oddity}]\label{thm:parity theorem}
Suppose that a region $R$ has an $(\{s,t\};p)$-corner.  For any $2 \le k \le \min\{s,t\}$, if this corner is complete up to $k$, then
\begin{equation}\label{eqn:2-open}
\# R \stackrel{2}{=} \# \hspace{.05in}
\parbox{1.5in}{\begin{picture}(0,0)%
\epsfig{file=kk+1.pstex}%
\end{picture}%
\setlength{\unitlength}{1184sp}%
\begingroup\makeatletter\ifx\SetFigFont\undefined%
\gdef\SetFigFont#1#2#3#4#5{%
  \reset@font\fontsize{#1}{#2pt}%
  \fontfamily{#3}\fontseries{#4}\fontshape{#5}%
  \selectfont}%
\fi\endgroup%
\begin{picture}(5487,5400)(2926,-5773)
\put(2926,-3811){\makebox(0,0)[lb]{\smash{{\SetFigFont{8}{9.6}{\familydefault}{\mddefault}{\updefault}$k$}}}}
\put(6151,-661){\makebox(0,0)[lb]{\smash{{\SetFigFont{8}{9.6}{\familydefault}{\mddefault}{\updefault}$k+1$}}}}
\end{picture}%
}
 + \#  \hspace{.05in}
 \parbox{1.6in}{\begin{picture}(0,0)%
\epsfig{file=k+1k.pstex}%
\end{picture}%
\setlength{\unitlength}{1184sp}%
\begingroup\makeatletter\ifx\SetFigFont\undefined%
\gdef\SetFigFont#1#2#3#4#5{%
  \reset@font\fontsize{#1}{#2pt}%
  \fontfamily{#3}\fontseries{#4}\fontshape{#5}%
  \selectfont}%
\fi\endgroup%
\begin{picture}(6087,5400)(2326,-5773)
\put(6151,-661){\makebox(0,0)[lb]{\smash{{\SetFigFont{8}{9.6}{\familydefault}{\mddefault}{\updefault}$k$}}}}
\put(2326,-4111){\makebox(0,0)[lb]{\smash{{\SetFigFont{8}{9.6}{\familydefault}{\mddefault}{\updefault}$k+1$}}}}
\end{picture}%
}.
\end{equation}
\noindent If $p = 1$, then \eqref{eqn:2-open} also holds for $k=1$.  Furthermore, for any $p$, if $s \le t$, the corner is complete up to $s$, and the corner is walled at $s$, then 
\begin{equation}\label{eqn:2-wall} 
\# R \stackrel{2}{=} \#  \hspace{.05in}
\parbox{1.5in}{\begin{picture}(0,0)%
\epsfig{file=ss+1.pstex}%
\end{picture}%
\setlength{\unitlength}{1184sp}%
\begingroup\makeatletter\ifx\SetFigFont\undefined%
\gdef\SetFigFont#1#2#3#4#5{%
  \reset@font\fontsize{#1}{#2pt}%
  \fontfamily{#3}\fontseries{#4}\fontshape{#5}%
  \selectfont}%
\fi\endgroup%
\begin{picture}(5487,4200)(2926,-4573)
\put(2926,-3811){\makebox(0,0)[lb]{\smash{{\SetFigFont{8}{9.6}{\familydefault}{\mddefault}{\updefault}$s$}}}}
\put(6151,-661){\makebox(0,0)[lb]{\smash{{\SetFigFont{8}{9.6}{\familydefault}{\mddefault}{\updefault}$s+1$}}}}
\end{picture}%
}.
\end{equation}
\end{thm}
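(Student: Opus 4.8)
The plan is to read the two summands on the right of \eqref{eqn:2-open} as counting tilings of $R$ that restrict, on a prescribed corner strip, to a prescribed subtiling, and then to cancel everything else in pairs modulo $2$. First I would record the key structural fact that each of the two strips appearing in \eqref{eqn:2-open} is a width-one region on an even number of squares, namely a single bent path, and so has a \emph{unique} domino tiling; in that tiling every domino lies inside the strip. Consequently the first summand counts exactly the tilings of $R$ whose restriction to the first strip is this unique internal tiling, and likewise for the second, and the two prescribed patterns are incompatible because they cover the corner square along opposite arms. Writing $\#R$ as (tilings extending pattern $1$) $+$ (tilings extending pattern $2$) $+$ (all remaining tilings), the identity \eqref{eqn:2-open} becomes equivalent to the claim that the remaining tilings admit a fixed-point-free involution, and hence contribute $0$ modulo $2$.

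To build that involution I would use the single local move that flips a $2\times 2$ block tiled by two parallel dominoes into the same block tiled by the two perpendicular dominoes. Scanning outward from the corner along the diagonal of the strip, a tiling that extends neither prescribed pattern must first deviate from both at some stage, and at the site of that first deviation one finds a flippable $2 \times 2$ block; I would apply the move to this first block. Flipping back reproduces the same first deviation, so the map is an involution, and since a block cannot be simultaneously parallel-horizontal and parallel-vertical it has no fixed points; one must also check that a flip never accidentally produces a tiling extending a prescribed pattern, which holds because the deviation occurs strictly before the strip is exhausted. The role of the completeness hypothesis is precisely to guarantee that whenever the scan reaches a deviation, all four squares of the block to be flipped actually lie in $R$: the first condition in the definition of $i$-completeness supplies the adjacent $(\{i-1,i-1\};p)$-strip running alongside the $(\{i,i\};p)$-strip, which is exactly the band through which the diagonal scan passes.

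Accordingly I would organize the verification as an induction on $k$. The base case $k=1$ is available only when $p=1$: there the strips are single dominoes covering the corner square, every tiling uses one of the two, and \eqref{eqn:2-open} holds as an exact (hence modular) identity with no involution needed; for $p>1$ the corresponding strips have $2p$ squares and do not reduce to a forced coverage of the corner square, which is why $k=1$ is excluded. For the inductive step I would peel off the outermost $(\{k,k\};p)$-strip $C$ and use the second condition in the definition of $i$-completeness, which asserts that the corner left behind is complete up to $k-2$, so that the inductive hypothesis applies to the smaller corner while the first condition supplies the squares needed to run the flips on $C$ itself. The hard part will be exactly this bookkeeping: showing that the ``first flippable block'' rule is well defined and fixed-point-free uniformly in $p$, and that completeness is neither more nor less than what the scan requires to locate a block whenever a deviation exists. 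I expect the most delicate point to be the interaction of the scan with the $p-1$ additional unit corners of a general $(\{s,t\};p)$-corner, where the local geometry is no longer a single right angle and one must confirm that the diagonal band and the canonical patterns still behave as in the $p=1$ case.

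Finally, \eqref{eqn:2-wall} should follow from \eqref{eqn:2-open} by truncation. Taking $k=s$ yields $\#R \eqq \#(\{s,s+1\}) + \#(\{s+1,s\})$, where the second summand prescribes a strip extending $s+1$ squares along the $s$-arm. But the hypothesis that the corner is walled at $s$ means that the $s$-arm terminates in another corner at distance $s$, so no tiling of $R$ can realize a strip of length $s+1$ in that direction; the second summand is therefore $0$, and \eqref{eqn:2-wall} follows at once. I anticipate the only subtlety here to be confirming, directly from the definition of ``walled,'' that the wall genuinely caps the arm at length $s$ rather than merely imposing a corner that still permits further extension.
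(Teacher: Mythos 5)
This paper does not actually contain a proof of Theorem~\ref{thm:parity theorem}: it is imported wholesale from \cite{oddity}, and the closest internal model for its argument is the inductive proof of Theorem~\ref{thm:4ity theorem}. So your proposal can only be judged on its own terms, and on those terms it has a genuine gap. Your framing is fine --- reading the shaded strips as prescribed subtilings with unique internal tilings, the exact identity at $k=1$, $p=1$, and the derivation of \eqref{eqn:2-wall} from \eqref{eqn:2-open} at $k=s$ because the walled hypothesis makes one configuration impossible --- but the engine of the proof, the ``flip the $2\times 2$ block at the first deviation'' involution, fails, and both of the verification points you flagged as needing checks are in fact false. Take $p=1$, $k=3$, index cells by (row, column) with the corner cell at $(1,1)$ and the arms going along row $1$ and column $1$, and write a domino as a pair of cells; the two prescribed patterns are then $P_1=\{(1,1)(1,2),\,(2,1)(3,1),\,(1,3)(1,4)\}$ and $P_2=\{(1,1)(2,1),\,(1,2)(1,3),\,(3,1)(4,1)\}$. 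First, let $T$ be any tiling of a large square containing $(1,1)(2,1)$, $(1,2)(1,3)$, $(3,1)(3,2)$, $(4,1)(4,2)$. It extends neither pattern; its only deviation from $P_2$ is at the outermost domino $(3,1)(4,1)$; the block $\{(3,1),(3,2),(4,1),(4,2)\}$ at that site is flippable; and flipping it produces a tiling that contains all of $P_2$. So the flip at the first deviation can exit the ``remaining'' set, precisely because the deviation can occur at the last stage of the strip --- your parenthetical claim that this cannot happen is wrong. Second, let $T$ instead contain $(1,1)(2,1)$, $(1,2)(1,3)$, $(3,1)(3,2)$, $(4,1)(5,1)$, and $(4,2)(4,3)$ (such tilings of the $8\times 8$ square exist, and its corner is complete up to $3$). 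Again $T$ extends neither pattern, and its first deviation from both patterns is at the cells $(3,1),(4,1)$; but none of the $2\times 2$ blocks meeting those cells is flippable, since each contains a cell covered by a domino protruding from the block. So your map is undefined on part of its domain.

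The deeper issue is that the parity cancellation asserted by the theorem is not realized by a single local move: tilings like the second example above must cancel against tilings differing from them in many dominoes, and producing that matching is essentially equivalent to the inductive argument of \cite{oddity}. That argument (mirrored in this paper's proof of Theorem~\ref{thm:4ity theorem}) assumes \eqref{eqn:2-open} for $k-1$, extends each of the two prescribed configurations by one domino at each free end --- finitely many local cases, whose existence in $R$ is exactly what completeness guarantees --- and then cancels, modulo $2$, pairs of \emph{distinct partial configurations} that leave identical untiled regions; the cancellation happens at the level of counts of configurations, never via an involution on individual tilings. Your closing sketch (peel off the $(\{k,k\};p)$-strip and recurse, using condition (b) of completeness) gestures toward this structure, but without replacing the flawed involution by that configuration-level bookkeeping, the proof does not go through.
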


This paper is concerned with regions that have reflective symmetry.  This added assumption is what allows for the refined understanding of $\# R$.

\begin{defn}
A region $R$ is \emph{reflective} if it has a reflective axis of symmetry.
\end{defn}

\begin{defn}
A pair of $(\{s,t\};p)$-corners in a reflective region $R$ is 
a \emph{reflective pair} if the two corners are images of each other under 
reflection 
across the region's axis of symmetry.  Moreover, when determining if a reflective pair of corners is complete up to $k$, it is further assumed that the subregions examined to determine completeness, as depicted in Figure~\ref{fig:completeness subregion}, are not overlapping.  This means that the symmetry axis does not intersect any squares in the subregion depicted in Figure~\ref{fig:completeness subregion}.
\end{defn}

In all figures, reflective regions will be drawn so that the symmetry axis is vertical, and it will be depicted by a dashed line.  Although the figures may suggest that the symmetry axis is parallel to one set of grid lines in the region, the exact orientation of the left and right sides of a reflective region are not restricted, unless they are shown to connect across the symmetry axis, as in equation~\eqref{eqn:k2kk}.

\begin{rem}\label{rem:reflections}
It is helpful to observe that if two regions are reflections of each other across a line, they have the same number of tilings.
\end{rem}

\section{Reflective regions, modulo 4}\label{sec:4ity}

The main result of this article resembles the parity theorem of \cite{oddity} (Theorem~\ref{thm:parity theorem} here), although the region $R$ is additionally required to be reflective and there are more subregions on the right side of the equation.  Some of the ideas fundamental to the proof of Theorem~\ref{thm:4ity theorem} are similar to those in the proof of Theorem~\ref{thm:parity theorem}, which explains the necessary overlap of terminology in the previous section.

\begin{thm}\label{thm:4ity theorem}
Suppose that a region $R$ is reflective, and that $R$ has a reflective pair of $(\{s,t\};p)$-corners.  For any $2 \le k \le \min\{s,t\}$, if these corners are complete up to $k$, then
\begin{equation}\label{eqn:4-open}\begin{split}
\4 R \eq & \hspace{.05in} \4 \hspace{.05in}
\parbox{1.9in}{\resizebox{1.9in}{!}{\input{shorthh.pstex_t}}}
+ \4 \hspace{.05in} 
\parbox{1.75in}{\resizebox{1.75in}{!}{\input{shortvv.pstex_t}}}\\
&+ 2 \4 \hspace{.05in}
\parbox{1.9in}{\resizebox{1.9in}{!}{\input{shorthv.pstex_t}}}.
\end{split}\end{equation}
\noindent If $p = 1$, then \eqref{eqn:4-open} also holds for $k=1$.  Furthermore, for any $p$, if $s \le t$, the corners are complete up to $s$, and the corners are walled at $s$, then 
\begin{equation}\label{eqn:4-wall} 
\4 R \eq  \4 \hspace{.05in} \parbox{1.75in}{\resizebox{1.75in}{!}{\input{shortvv-wall.pstex_t}}}.
\end{equation}
\noindent (Note that the exact orientation of the reflective pair of corners, with respect to the axis of symmetry, is not specified.)
\end{thm}

\begin{proof}

The proof of this theorem is primarily inductive.  First the theorem will be proved for $p=1$, by induction on $k$.  Subsequently, the proof will be completed by inducting on $p$.  Note that for fixed $k$ and $p$, equation \eqref{eqn:4-open} trivially implies equation \eqref{eqn:4-wall}, since two of the regions pictured in \eqref{eqn:4-open} are impossible if the corners are walled at $s$.

Suppose that $p=1$.  If $k=1$, then \eqref{eqn:4-open} trivially holds, by Remark~\ref{rem:reflections} and the fact that there are two ways to place a domino in each of the two $(\{s,t\};1)$-corners under consideration.  Observe that \eqref{eqn:4-open} also holds for $k=2$, because there are eight ways to tile the two $(\{2,2\};1)$-corners, and four of these eight tile the same subregion of $R$: a $2 \times 2$ block in each of the two corners.

Now assume that the theorem holds for all $1 \le k < K \le \min\{s,t\}$, and suppose that $R$ has a reflective pair of $(\{s,t\};1)$-corners complete up to $K$.  These corners must also be complete up to $K-1$, so \eqref{eqn:4-open} holds for $k = K-1$, as indicated in equation~\eqref{eqn:Kinduction}.

\begin{equation}\label{eqn:Kinduction}\begin{split}
\4 R \eq & \hspace{.05in} \4 \hspace{.05in}
\parbox{1.75in}{\resizebox{1.75in}{!}{\input{K1shorthh.pstex_t}}}
+ \4 \hspace{.05in}
\parbox{1.95in}{\resizebox{1.95in}{!}{\input{K1shortvv.pstex_t}}}\\
&+ 2 \4 \hspace{.05in}
\parbox{1.85in}{\resizebox{1.85in}{!}{\input{K1shorthv.pstex_t}}}
\end{split}\end{equation}

To extend a tiled region in \eqref{eqn:Kinduction} to cover the two $(\{K,K\},1)$-corners, one tile must be added to each corner, and each added tile can have two orientations: horizontal or vertical.  Each of the four configurations in \eqref{eqn:Kinduction} (counting the doubled third term twice) leads to four tilings, giving a total of sixteen possible tilings (counting repetitions).

At this point, it is helpful to introduce notation.  In each figure in equation~\eqref{eqn:Kinduction}, one corner is depicted to the left of the symmetry axis and one corner is depicted to the right of the axis.  The configuration of the left corner will be described in bold face, and the configuration of the right corner will be described with an underline.  In the tilings of \eqref{eqn:Kinduction}, if the longer leg of the tiled corner is horizontal (respectively, vertical), this will be called an $H$ (respectively, $V$) tiling.  If the tile added in order to cover the $(\{K,K\},1)$-corner is horizontal (respectively, vertical), this will be called an $h$ (respectively, $v$) tile.  For example, the third figure in equation~\eqref{eqn:Kinduction} is denoted $\bm{V}\ul{H}$.

Therefore, the sixteen tilings referred to above are described in the following table:
\begin{equation}\label{eqn:table of tilings}
\begin{array}{r|ccccc}
\text{Tiling from \eqref{eqn:Kinduction}} & \text{Add:} & \bm{h} \text{ and } \ul{h} & \bm{h} \text{ and } \ul{v} & \bm{v} \text{ and } \ul{h} & \bm{v} \text{ and } \ul{v}\\
\hline
\bm{V}\ul{V} & & \bm{Vh}\ul{Vh} & \bm{Vh}\ul{Vv} & \bm{Vv}\ul{Vh} & \bm{Vv}\ul{Vv}\\
\bm{H}\ul{H} & & \bm{Hh}\ul{Hh} & \bm{Hh}\ul{Hv} & \bm{Hv}\ul{Hh} & \bm{Hv}\ul{Hv}\\
\text{(twice) } \bm{V}\ul{H} & & \bm{Vh}\ul{Hh} & \bm{Vh}\ul{Hv} & \bm{Vv}\ul{Hh} & \bm{Vv}\ul{Hv}\\
\end{array}
\end{equation}

The reflective symmetry of $R$ and Remark~\ref{rem:reflections} indicate that \eqref{eqn:table of tilings} still includes duplications: $\# \bm{Vh}\ul{Vv} = \#\bm{Vv}\ul{Vh}$ and $\# \bm{Hh}\ul{Hv} = \# \bm{Hv}\ul{Hh}$.

Combining these facts yields the following identity:
\begin{equation}\label{eqn:variable equation}\begin{split}
\4 R \eq & \  \4 \bm{Vh}\ul{Vh} + \4 \bm{Hv}\ul{Hv} + 2\4 \bm{Vh}\ul{Hv}\\
&+ 2 \4 \bm{Vh}\ul{Vv} + \4 \bm{Vv}\ul{Vv}\\
&+ \4 \bm{Hh}\ul{Hh} + 2 \4 \bm{Hh}\ul{Hv}\\
&+ 2 \4 \bm{Vh}\ul{Hh} + 2 \4 \bm{Vv}\ul{Hh} + 2 \4 \bm{Vv}\ul{Hv}.
\end{split}\end{equation}

\noindent Notice that if the only terms on the right side of equation~\eqref{eqn:variable equation} were the first three terms, the induction on $K$ would be complete.

Observe that equation~\eqref{eqn:2-wall} of Theorem~\ref{thm:parity 
theorem} applies to the left sides of $\bm{Hh}\ul{Hv}$ and 
$\bm{Vv}\ul{Hv}$, as in Figure~\ref{fig:HhHv-par}.  Additionally, 
Lemma~\ref{lem:mod4mod2} permits the transition from working modulo $4$ to 
working modulo $2$, since the terms $\4 \bm{Hh}\ul{Hv}$ and $\4 
\bm{Vv}\ul{Hv}$ each have coefficient $2$ in \eqref{eqn:variable 
equation}.

\begin{figure}[htbp]
\centering
\input{HhHv-par.pstex_t}
\caption{Application of Theorem~\ref{thm:parity theorem} to the left side of $\bm{Hh}\ul{Hv}$.}\label{fig:HhHv-par}
\end{figure}

After applying Theorem~\ref{thm:parity theorem} to each of 
$\bm{Hh}\ul{Hv}$ and $\bm{Vv}\ul{Hv}$, the resulting region is the same.  
Therefore, $2 \4 \bm{Hh}\ul{Hv} + 2 \4 \bm{Vv}\ul{Hv}$ is a multiple of 4, and thus these terms can be ignored in \eqref{eqn:variable equation}.  Likewise $2 \4 \bm{Vh}\ul{Vv} + 2 \4 
\bm{Vh}\ul{Hh}$ contributes a multiple of $4$ to the right side of 
\eqref{eqn:variable equation}, and can be ignored as well.

All that remains in \eqref{eqn:variable equation}, besides the first three terms, is
\begin{equation*}
\4 \bm{Vv}\ul{Vv} + \4 \bm{Hh}\ul{Hh} + 2 \4 \bm{Vv}\ul{Hh}.
\end{equation*}
\noindent The regions $\bm{Vv}\ul{Vv}$ and $\bm{Hh}\ul{Hh}$ are both reflective with the same symmetry axis as $R$.  Therefore, by induction, they obey equation~\eqref{eqn:4-wall}.  Similarly, equation~\eqref{eqn:2-wall} can be applied twice to $\bm{Vv}\ul{Hh}$, again using Lemma~\ref{lem:mod4mod2}.  After applying these results as described, the three regions are identical.  Therefore, since $\4 \bm{Vv}\ul{Hh}$ is multiplied by $2$ in \eqref{eqn:variable equation}, these three terms contribute a multiple of $4$ to the sum, and consequently can be ignored.

Therefore, equation~\eqref{eqn:variable equation} reduces to
\begin{equation*}
\4 R \eq \ \4 \bm{Vh}\ul{Vh} + \4 \bm{Hv}\ul{Hv} + 2 \4 \bm{Vh}\ul{Hv},
\end{equation*}
\noindent which, by Remark~\ref{rem:reflections}, is the statement of the theorem for $p=1$ and $k = K$.  This completes the first part of the induction.

To complete the proof, assume that the theorem is true for all $1 \le p < P$, and suppose that $R$ has a reflective pair of $(\{s,t\};P)$-corners.  Consider the pair of $(\{s,1\};1)$-corners in these $(\{s,t\};P)$-corners.  A tile can be placed in each of these corners either horizontally or vertically.  A horizontal tile forces $P-1$ additionally horizontal tiles above it, and leaves a $(\{s-1,2\};1)$-corner below it, while a vertical tile creates a $(\{3,t\};P-1)$-corner to the right of the tile.
If a horizontal tile is placed in one of the two $(\{s,1\};1)$-corners, and a vertical tile in the other, the resulting figure is the reflection across the symmetry axis of the configuration where the two tiles are switched.  Therefore
\begin{equation}\label{eqn:Phv figure}
\#R = \# \parbox{1.25in}{\epsfig{file=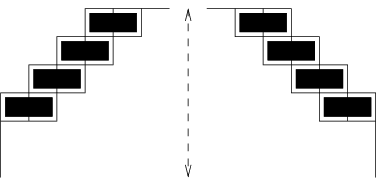,width=1.25in}} + \#\parbox{1.25in}{\epsfig{file=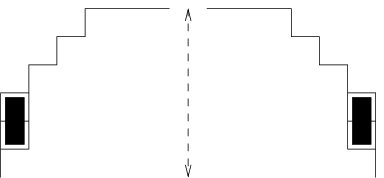,width=1.25in}} + 2\# \parbox{1.25in}{\epsfig{file=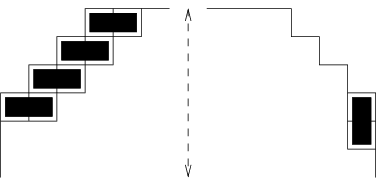,width=1.25in}}.
\end{equation}

Note that the first and second figures in equation~\eqref{eqn:Phv figure} are reflective.  Additionally, by the induction hypothesis, the theorem holds for $(\{s-1,2\};1)$-corners and for $(\{3,t\};P-1)$-corners, so the theorem holds for these corners in the first and second figures of \eqref{eqn:Phv figure}.  Likewise, Theorem~\ref{thm:parity theorem} applies to the left and right sides of the third figure in \eqref{eqn:Phv figure}.  Applying these theorems, as well as Lemma~\ref{lem:mod4mod2}, and taking into account when two configurations are identical except for reflection across the axis of symmetry yields the following:
\begin{equation}\label{eqn:Phv figure - detail}\begin{split}
\4 R \eq & \hspace*{.05in} \4 \parbox{1in}{\epsfig{file=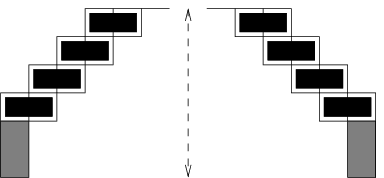,width=1in}} + \4 \parbox{1in}{\epsfig{file=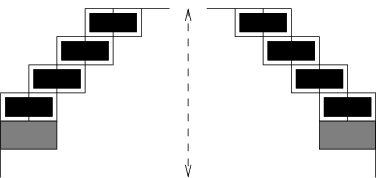,width=1in}} + 2 \4 \parbox{1in}{\epsfig{file=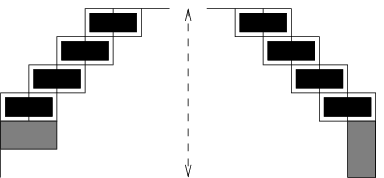,width=1in}}\\
& + \4 \parbox{1in}{\epsfig{file=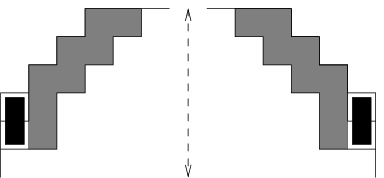,width=1in}} + \4 \parbox{1in}{\epsfig{file=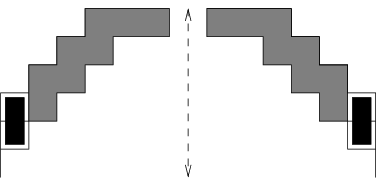,width=1in}} + 2 \4 \parbox{1in}{\epsfig{file=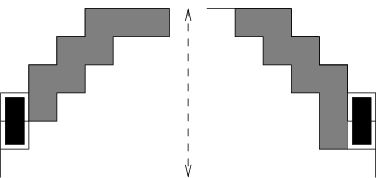,width=1in}}\\
& + 2 \4 \parbox{1in}{\epsfig{file=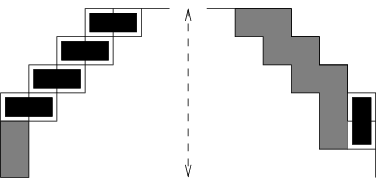,width=1in}} + 2 \4 \parbox{1in}{\epsfig{file=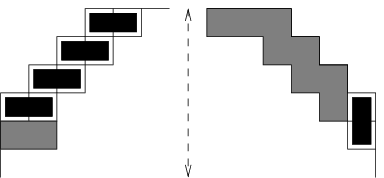,width=1in}} + 2 \4 \parbox{1in}{\epsfig{file=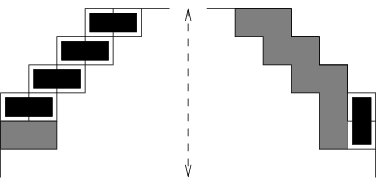,width=1in}}\\
& + 2 \4 \parbox{1in}{\epsfig{file=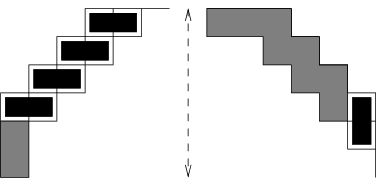,width=1in}}.
\end{split}\end{equation}

Since everything in equation~\eqref{eqn:Phv figure - detail} is taken modulo $4$, those terms that appear four times can be ignored.  There are three such quadruples, and the remaining equation is exactly equation~\eqref{eqn:4-open} for $k=2$ and $p=P$.

To complete the proof, assume the result holds for this $P$ and for all $2 \le k < K \le \min\{s,t\}$, where the $(\{s,t\};P)$-corners under consideration are complete up to $K$.  These corners must also be complete up to $K-1$, so the result holds for $k=K-1$.  An argument analogous to the previous discussion for $p=1$ will complete the proof.
\end{proof}

\section{Applications of the congruency result}\label{sec:apps}

Under certain conditions, Theorem~\ref{thm:4ity theorem} relates the number of domino tilings of a region modulo $4$, to that aspect of the number of domino tilings of three particular subregions.  Thus, $\4 R$ may be determined by iteratively determining $\4 R_i$ for these subregions $R_i \subset R$.  This is not necessarily an attractive prospect, and so it is helpful to highlight some situations when Theorem~\ref{thm:4ity theorem} can be used to great effect.

One easy application of Theorem~\ref{thm:4ity theorem} is the following.

\begin{cor}\label{cor:double wall}
Suppose that a region $R$ is reflective, and that $R$ has a reflective pair of $(\{s,s\};p)$-corners.  If these corners are complete up to $s$ and each walled at $s$ along both sides, then $\# R$ is a multiple of $4$.
\end{cor}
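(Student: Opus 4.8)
The plan is to apply equation~\eqref{eqn:4-wall} of Theorem~\ref{thm:4ity theorem} to $R$ and then to argue that the single region appearing on its right-hand side admits no tiling at all, so that $\4 R \eq 0$. First I would verify the hypotheses of \eqref{eqn:4-wall}. The corners are $(\{s,s\};p)$-corners, so designating the two legs as the ``$s$'' and ``$t$'' legs gives $s \le t$ with $t = s$; by assumption the corners are complete up to $s$ and walled at $s$ along both sides, hence in particular along whichever side is used as the wall here. Thus \eqref{eqn:4-wall} applies and yields $\4 R \eq \4 W$, where $W$ is the reflective region obtained from $R$ by fixing, in each of the two corners, the subtiling indicated in the figure of \eqref{eqn:4-wall}: an $(\{s,s+1\};1)$-strip whose leg of length $s$ runs along the walled side and whose leg of length $s+1$ runs along the other side of the corner.

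The heart of the argument is to observe that this prescribed subtiling cannot exist. In each corner the strip's longer leg, of length $s+1$, is forced to run along the \emph{second} side of the $(\{s,s\};p)$-corner. But that side also has length $s$ and is itself walled at $s$: at its far endpoint the boundary of $R$ turns to form another corner, so $R$ contains no square beyond distance $s$ in that direction. Consequently the $(s+1)$st square of the longer leg would lie outside $R$, the indicated subtiling is impossible, and therefore $\4 W = 0$. Combining this with the previous congruence gives $\4 R \eq \4 W = 0$, which is exactly the assertion that $\# R$ is a multiple of $4$.

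The step I expect to be the main obstacle is this geometric verification that the longer leg of the forced strip is genuinely blocked: it relies on both halves of the hypothesis, namely that the two legs have equal length $s$ (so the longer leg overshoots the second side by exactly one square) and that the second side is also walled (so there is no room for that overshoot), which is precisely why ``walled along both sides'' is needed. It is worth recording, as a sanity check, that the non-overlapping condition built into the definition of a reflective pair is what keeps the two corners far enough apart for \eqref{eqn:4-wall} to be legitimately applicable; without it the $2 \times 2$ square, whose two top corners are $(\{2,2\};1)$-corners walled on both sides, would be a spurious counterexample, since it has exactly $2$ tilings.
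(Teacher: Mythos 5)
Your proposal is correct and is essentially the paper's own argument: the paper applies equation~\eqref{eqn:4-open} with $k=s$ and notes that all three pictured configurations are impossible because of the two walls, whereas you route through~\eqref{eqn:4-wall} (which the paper itself observes is just the walled specialization of~\eqref{eqn:4-open}) and then use the second wall to kill the single remaining configuration. The substance --- Theorem~\ref{thm:4ity theorem} plus the observation that a forced $(\{s,s+1\};1)$-strip cannot fit along a side of length $s$ that is walled at $s$ --- is identical, and your sanity check about the non-overlapping requirement (ruling out the $2\times 2$ square) is a correct reading of the definition of a reflective pair.
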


\begin{proof}
The three configurations on the right side of equation~\eqref{eqn:4-open} are impossible, so all of these terms contribute zero to the sum.
\end{proof}

More can be said about $\4 R$ if the reflective pair of corners share a longer side crossing the axis of symmetry.  This occurs, for example, in the $k \times 2s$ and $k \times (2s+1)$ rectangles, where $k \ge s$.

\begin{cor}\label{cor:k2kk}
Suppose that a region $R$ is reflective, and that $R$ has a reflective pair of $(\{s,s\};p)$-corners.  If these corners are complete up to $s$ and $R$ has an edge of length $2s$ which is comprised of one segment of length $s$ from each corner (hence the axis of symmetry bisects this segment of length $2s$), then
\begin{equation}\label{eqn:k2kk}
\4 R \eq \4 \parbox{1.75in}{\resizebox{1.75in}{!}{\input{k2kkshort.pstex_t}}}
+ \4 \parbox{2.04in}{\resizebox{2.04in}{!}{\input{k2kklong.pstex_t}}}.
\end{equation}
\end{cor}

\begin{proof}
First apply Theorem~\ref{thm:4ity theorem} for $k=s-1$.  Then, to each of the configurations described in equation~\eqref{eqn:4-open}, add two tiles (or, in one situation, a single tile) so that the two $(\{s,s\};p)$-corners are each completely tiled.  Combining those configurations that are obtained by reflection across the axis of symmetry yields
\begin{equation}\label{eqn:k2kk possibilities}\begin{split}
\4 R \eq & \hspace{.05in} \4 \parbox{1.1in}{\resizebox{1.1in}{!}{\input{k2kk1.pstex_t}}}
+  \4 \parbox{1.1in}{\resizebox{1.1in}{!}{\input{k2kk2.pstex_t}}}
+ \4 \parbox{1.1in}{\resizebox{1.1in}{!}{\input{k2kk5.pstex_t}}}\\
& + 2 \4 \parbox{1.1in}{\resizebox{1.1in}{!}{\input{k2kk6.pstex_t}}}
+ \4 \parbox{1.1in}{\resizebox{1.1in}{!}{\input{k2kk7.pstex_t}}}
+  2 \4 \parbox{1.1in}{\resizebox{1.1in}{!}{\input{k2kk3.pstex_t}}}\\
& + 2 \4 \parbox{1.1in}{\resizebox{1.1in}{!}{\input{k2kk4.pstex_t}}}.
\end{split}\end{equation}

The second and third figures on the right side of equation~\eqref{eqn:k2kk possibilities} are reflective, so Theorem~\ref{thm:4ity theorem}, in particular, equation~\eqref{eqn:4-wall}, applies to each.  In fact, both applications give the same configuration.  Applying Theorem~\ref{thm:parity theorem} (and Lemma~\ref{lem:mod4mod2}) to each side of the doubled seventh term in \eqref{eqn:k2kk possibilities} gives the same configuration again.  Thus this resulting term appears four times, so it can be ignored.

Similarly, apply Theorem~\ref{thm:parity theorem} and Lemma~\ref{lem:mod4mod2} to each side of the doubled sixth figure on the right side of \eqref{eqn:k2kk possibilities}, and do likewise to the doubled fourth term, first to the right corner and then to the left.  These yield two pairs of figures which are reflections of each other across the axis of symmetry.  Thus they have the same number of domino tilings, and so this again gives a multiple of four, which can be ignored.

Therefore the only figures on the right side of equation~\eqref{eqn:k2kk possibilities} which are left are the first and fifth, which is the statement of the corollary.
\end{proof}

\begin{cor}\label{cor:k2k+1k}
Suppose that a region $R$ is reflective, and that $R$ has a reflective pair of $(\{s,s\};p)$-corners.  If these corners are complete up to $s$ and $R$ has an edge of length $2s+1$ which is comprised of one segment of length $s$ from each corner and a segment of length $1$ between them (hence the axis of symmetry bisects this segment of length $2s+1$), then
\begin{equation}\label{eqn:k2k+1k}
\4 R \eq  \4 \parbox{1.95in}{\resizebox{1.95in}{!}{\input{k2k+1klong.pstex_t}}}
+ 2 \4 \parbox{1.85in}{\resizebox{1.85in}{!}{\input{k2k+1kshort.pstex_t}}}.
\end{equation}
\end{cor}

\begin{proof}
This follows immediately from equation~\eqref{eqn:4-open} of Theorem~\ref{thm:4ity theorem}, since the second tiling in \eqref{eqn:4-open} is impossible, and the first tiling in \eqref{eqn:4-open} forces a vertical tile between the two pieces, and equation~\eqref{eqn:4-wall} applies to the resulting region.
\end{proof}

As in \cite{oddity}, consider the following types of regions.

\begin{defn}
Let $T(i,j,p)$ be the region with $i+p-1$ centered rows of lengths $j, j+2, \ldots, j+2(p-1), \ldots, j+2(p-1)$ from top to bottom.
\end{defn}

\begin{defn}
Let $D(i,j,p)$ be the region with $i+2(p-1)$ centered rows of lengths $j, j+2, \ldots, j+2(p-1), \ldots, j+2(p-1), \ldots, j+2, j$ from top to bottom.  That is, if $i$ is even, $D(i,j,p)$ consists of two copies of $T(i/2,j,p)$ that have been glued together along the edge of length $j+2(p-1)$.
\end{defn}

For example, the Aztec diamond of order $p$ is the region $D(2,2,p)$, and the regions $T(2,5,4)$ and $D(2,5,4)$ are depicted in Figure~\ref{fig:TD ex}.

\begin{figure}[htbp]
\centering
$\begin{array}{c@{\hspace{.5in}}c}
\multicolumn{1}{l}{\mbox{\bf{(a)}}} & \multicolumn{1}{l}{\mbox{\bf{(b)}}}\\
[-.25cm]
\epsfig{file=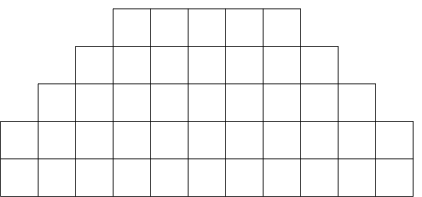,width=1.25in} &\epsfig{file=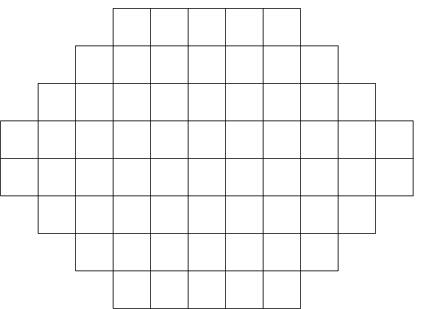,width=1.25in}
\end{array}$
\caption{$\bf{(a)}$ The region $T(2,5,4)$.  $\bf{(b)}$ The region $D(2,5,4)$, which is two copies of $T(1,5,4)$ glued together along the edges of length $11$.}\label{fig:TD ex}
\end{figure}

The following results concerning the parities of $\# T(i,j,p)$ and of $\# D(i,j,p)$ appeared in \cite{oddity}, although Corollary~\ref{cor:oddity T}(a) and Corollary~\ref{cor:oddity D}(c) were incorrectly stated for $p=1$.

\begin{cor}[\cite{oddity}]\label{cor:oddity T}
\ 
\begin{enumerate}
\item $\# T(k,2k-1,p) \eqq \left\{ \begin{array} {c@{\quad:\quad}l}
1 & p=1 \text{ and } k \text{ is even};\\ 0 & \text{otherwise}. \end{array} \right.$.
\item $\# T(k,2k,p) \eqq 1$.
\item $\# T(k,2k+1,p) \eqq 0$.
\item $\# T(k,2k+2,p) \eqq 1$.
\end{enumerate}
\end{cor}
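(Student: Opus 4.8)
The plan is to prove the four congruences simultaneously by induction, using the parity theorem (Theorem~\ref{thm:parity theorem}) to peel a corner off $T(k,2k+\delta,p)$ for $\delta\in\{-1,0,1,2\}$. The natural base is $p=1$, where $T(k,j,1)$ is simply the $k\times j$ rectangle, so parts (a)--(d) become assertions about the rectangles $k\times(2k-1)$, $k\times 2k$, $k\times(2k+1)$, and $k\times(2k+2)$. I would dispose of these first: a region of odd area has no tiling, which already yields the vanishing asserted in (c) and in the $k$-odd half of (a). The remaining rectangle parities I would obtain by the $180^\circ$-rotation involution (noncentrally symmetric tilings cancel in pairs modulo $2$, so $\#\,R$ is congruent modulo $2$ to the number of centrally symmetric tilings, which is read off from the quotient region) or, alternatively, by iterating Theorem~\ref{thm:parity theorem} at a corner until the forced subregion is trivial.

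For the inductive step, fix $p\ge 2$ and consider the top-left corner of $T(k,2k+\delta,p)$. Because the first $p$ rows grow in length by $2$ and the rest are constant, the left boundary descends in a depth-$p$ staircase whose horizontal leg is the top edge (length $2k+\delta$) and whose vertical leg is the straight left side (length $k$); this is an $(\{k,2k+\delta\};p)$-corner. Since $T$ is solid (no holes) and its top is wide, $2k+\delta\ge k$, the subregions of Figure~\ref{fig:completeness subregion} all lie inside $R$, so the corner is complete up to $k$, and it is walled at its vertical leg (that leg terminates in the bottom-left corner of the region). Thus the walled form~\eqref{eqn:2-wall} applies with $s=k$: modulo $2$, $\#\,T(k,2k+\delta,p)$ equals the number of tilings of the region obtained by deleting the forced $(\{k,k+1\};1)$-strip at that corner. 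I would then identify this strip-deleted region within the $T$-family (after, if necessary, a further application of Theorem~\ref{thm:parity theorem} to absorb the residual notch left by the staircase), which I expect to reduce $p$ and the row count while keeping the width in the same residue class $2k'+\delta$, thereby explaining the $p$-independence of (b)--(d); the four values of $\delta$ are then handled in parallel, each reduced case being supplied by the induction hypothesis.

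The hard part is this last identification, together with part (a). Tracking exactly which smaller region the forced strip leaves---and checking that its residue propagates to give $0,1,0,1$ for $\delta=-1,0,1,2$---is the crux, and (a) is genuinely delicate: the region $T(k,2k-1,p)$ has every row of odd length, so its area is odd precisely when $k\equiv p\pmod 2$, and only then does (a) vanish for free; in the complementary cases the congruence to $0$ must come from cancellation in the reduction rather than from area parity. This is exactly the boundary at which the earlier statement in~\cite{oddity} was in error for $p=1$, so I would isolate (a), and in particular its $p=1$ line (where $\#\,(k\times(2k-1))$ is odd if and only if $k$ is even), as a separate check rather than folding it into the uniform argument for (b)--(d). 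Since everything here is computed modulo $2$, no appeal to Lemma~\ref{lem:mod4mod2} is needed; the entire burden is in the geometry of the strip deletions and in closing the four cases consistently.
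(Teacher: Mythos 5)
A preliminary point: the paper contains no proof of this corollary --- it is imported verbatim from \cite{oddity} (with the stated correction to part (a) for $p=1$), so there is no internal argument to compare yours against; your attempt has to stand on its own as a reconstruction of the proof in \cite{oddity}. As such it has the right flavor (induction on $p$ driven by Theorem~\ref{thm:parity theorem}, with rectangles as the $p=1$ base case), but it is not a proof, and it contains one outright error. You claim the vanishing in (c) comes for free from odd area. It does not: every row of $T(k,2k+1,p)$ has odd length and there are $k+p-1$ rows, so the area is odd exactly when $k\equiv p\pmod 2$. Already for $p=1$, $k=2$ the region is the $2\times 5$ rectangle, with even area and $8$ tilings; the evenness asserted in (c) in these cases needs a genuine cancellation argument, just as the even-area cases of (a) do (which you defer to ``a separate check'' but likewise never supply).

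The substantive gap is the one you flag yourself: the entire plan funnels into ``identify the strip-deleted region within the $T$-family and track the residues,'' and that step is left open. But that step \emph{is} the proof; everything else is setup. Note also that the single-corner deletion you propose does not stay inside the $T$-family: for $p\ge 2$ the configuration forced by \eqref{eqn:2-wall} is an $(\{s,s+1\};p)$-strip wrapping around the staircase, not an $(\{k,k+1\};1)$-strip as you write, and removing it from one corner of $T(k,2k+\delta,p)$ leaves an asymmetric region with a notch and a partial top row. Steering such a region back into the $T$-family requires a carefully chosen sequence of further applications of Theorem~\ref{thm:parity theorem}, and propagating the residues $0,1,0,1$ through that sequence is exactly where the work lies --- and exactly where the $p=1$ error in \cite{oddity}, which this paper corrects, crept in. (Contrast this with the mod-$4$ reductions in Section~\ref{sec:apps}, e.g.\ Corollary~\ref{cor:k2kk}, which delete strips from \emph{both} corners symmetrically and therefore land back on a $T$-region in one move.) Finally, your fallback for the rectangle base case --- that non-centrally-symmetric tilings cancel in pairs, so the count is ``read off from the quotient region'' --- glosses over the dominoes straddling the center of symmetry; making that involution precise is itself nontrivial. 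Until the reduction is carried out explicitly and the base cases are actually computed, what you have is a plausible program, not a proof.
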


\begin{cor}[\cite{oddity}]\label{cor:oddity D}
\begin{enumerate}
\item $\# D(k,k,p) \eqq 0$.
\item $\# D(k,k+1,p) \eqq 1$.
\item $\# D(k,k+2,p) \eqq \left\{ \begin{array} {c@{\quad:\quad}l}
1 & p=1 \text{ and } $k$ \text{ is even};\\ 0 & \text{otherwise}. \end{array} \right.$
\end{enumerate}
\end{cor}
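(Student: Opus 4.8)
The statement is a family of parity (that is, mod $2$) assertions, so the engine of the proof is Theorem~\ref{thm:parity theorem}, together with the defining observation that $D(i,j,p)$ is built by reflecting the trapezoid $T$ across its widest row (for even $i$ it is literally two copies of $T(i/2,j,p)$ glued along that edge). The plan is to prove the three congruences by induction on $p$, using the rectangles $D(k,j,1)$ as the base case, and reducing each $D$-region to smaller $D$- and $T$-regions by peeling a staircase corner with Theorem~\ref{thm:parity theorem}, invoking Corollary~\ref{cor:oddity T} for the trapezoidal pieces that arise.

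For the base case $p=1$, observe that $D(k,j,1)$ is simply the $k\times j$ rectangle, so parts (a), (b) and (c) reduce to statements about the parity of the number of tilings of the $k\times k$, $k\times(k+1)$ and $k\times(k+2)$ rectangles. Each follows by applying Theorem~\ref{thm:parity theorem} to a corner of the rectangle (which is vacuously complete, since a rectangle has no holes) and iterating to shrink the dimensions until the parity can be read off directly from a $1\times n$ strip or a small rectangle. This already exhibits the exceptional behavior of part (c): the $k\times(k+2)$ rectangle has an odd number of tilings exactly when $k$ is even, which is the only situation in which the residue $1$ survives.

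For the inductive step, I would assume the result for $p-1$. The top of $D(k,j,p)$ presents an $(\{s,t\};p)$-corner, with $s$ and $t$ read off from the geometry; because $D(k,j,p)$ has no holes, this corner is complete up to every $m\le\min\{s,t\}$, so Theorem~\ref{thm:parity theorem} applies. Peeling off the appropriate $(\{m,m+1\};p)$-strip expresses $\# D(k,j,p)$, modulo $2$, as a sum of the tiling counts of two subregions; identifying one as a $D$-region with $p$ decreased and the other as a trapezoid of type $T$ then closes the induction via the hypothesis and Corollary~\ref{cor:oddity T}, with Remark~\ref{rem:reflections} used to merge subregions that are mirror images across the symmetry axis.

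The main obstacle is the geometric bookkeeping: correctly determining which two subregions the parity theorem produces from the staircase corner of $D(k,j,p)$, and verifying that each is again of type $T$ or $D$ with exactly the parameters needed to apply the induction hypothesis and Corollary~\ref{cor:oddity T}. A secondary subtlety, which I expect to require the most care, is explaining why the oddness in part (c) persists only at $p=1$: increasing $p$ introduces the staircase, and the extra strip removed in the inductive step supplies precisely the factor that forces the count to be even for all $p\ge 2$. Pinning down that collapse — rather than the routine mod $2$ arithmetic — is the crux of the argument.
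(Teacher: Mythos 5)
First, a point of calibration: the paper never actually proves this corollary---it is imported (with a correction to part (c) at $p=1$) from \cite{oddity}---so the only internal yardstick is the paper's mod-4 analogue, Corollary~\ref{cor:D}, whose proof displays what the intended reductions look like. Measured against that, your base case is sound in substance: $D(k,j,1)$ is the $k\times j$ rectangle, and Theorem~\ref{thm:parity theorem} does drive a Euclidean-style reduction of rectangle parities. In fact the square needs no iteration at all: a corner of the $k\times k$ square is walled at $k$ on both sides, so both configurations in \eqref{eqn:2-open} are impossible and $\#(k\times k)\eqq 0$ in one step (the mod-2 shadow of Corollary~\ref{cor:double wall}). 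The same one-step argument disposes of part (a) for \emph{every} $p$ at once: in $D(k,k,p)$ both segments of length $k$ at a staircase corner terminate in another staircase corner, so the corner is doubly walled and $\#D(k,k,p)\eqq 0$ with no induction on $p$ whatsoever---a simplification your plan misses.

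The genuine gap is your inductive step on $p$. For $j=k+1$ or $j=k+2$ the staircase corner of $D(k,j,p)$ is walled at $k$, so Theorem~\ref{thm:parity theorem} produces exactly \emph{one} surviving subregion, not the ``sum of the tiling counts of two subregions'' you describe; and that subregion is neither a $D$-region with $p$ decreased nor a $T$-region. Concretely, the $(\{k,k+1\};p)$-strip that gets removed consists of the entire top row, the left column of the $k$-row middle block, and the staircase connectors, and what remains is a skew, non-centered region: from $D(2,3,2)$, for instance, one is left with a ten-square zigzag whose rows have lengths $3,4,3$ with the top row flush right and the bottom row flush left. So the identification on which your induction rests has no basis. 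The reductions that actually work---visible in the proof of Corollary~\ref{cor:D}---hold $p$ fixed and decrease $k$, e.g.\ $\4 D(k,k+1,p)\eq 2\4 D(k-2,k-1,p)+\4 D(k-4,k-3,p)$, whose mod-2 shadow is what you need; $p$ decreases only in small-$k$ base cases such as $\4 D(2,4,p)\eq \4 D(2,2,p-1)$. This also shows that your proposed mechanism for the $p=1$ exception in part (c) is not the real one: the exception does not arise from ``the extra strip removed in the inductive step,'' but from the base cases of the induction on $k$, namely $\# D(2,4,p)\eqq \# D(2,2,p-1)\eqq 0$ for $p\ge 2$, whereas $\# D(2,4,1)=5$ is odd.
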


These results can be refined using Theorem~\ref{thm:4ity theorem}.  For example, Corollary~\ref{cor:oddity T}(b) states that $\# T(k,2k,p) \eqq 1$.  Thus $\4 T(k,2k,p) \eq 1 \text{ or } 3$.  In fact, the exact residue modulo $4$ is easy to determine from previous results in this paper.

\begin{cor}\label{cor:T}
\ 
\begin{enumerate}
\item $\4 T(k,2k-1,p) \eq \left\{ \begin{array} {c@{\quad:\quad}l}
1 & p=1 \text{ and } k \eq 0;\\ 3 & p=1 \text{ and } k \eq 2;\\ 0 & \text{otherwise}. \end{array} \right.$
\item $\4 T(k,2k,p) \eq 1$ for all $k, p \ge 1$.
\item $\4 T(k,2k+1,p) \eq 0$ for all $k, p \ge 1$.
\item $\4 T(k,2k+2,p) \eq 1$ for all $k, p \ge 1$.
\end{enumerate}
\end{cor}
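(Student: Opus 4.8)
The plan is to promote each clause of Corollary~\ref{cor:oddity T} from a congruence modulo $2$ to one modulo $4$ by applying the machinery of this section to the reflective pair of \emph{staircase} corners of $T(k,j,p)$. Each region $T(k,j,p)$ is reflective across the vertical axis through the centers of its rows, and its $p$-step top boundary consists of two $(\{s,t\};p)$-corners that are reflections of one another, with $s=k$ (the vertical drop to the flat part) and $t$ governed by how far the top edge reaches toward the axis. Since the top edge has length $j$, the two horizontal legs either meet the axis (when $j$ is even) or straddle a single central column (when $j$ is odd), so the pair shares the crossing edge of length $j$ exactly as required by Corollaries~\ref{cor:k2kk} and~\ref{cor:k2k+1k}. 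Concretely: for $j=2k$ the corners are $(\{k,k\};p)$ and Corollary~\ref{cor:k2kk} applies with $s=k$; for $j=2k+1$ they are again $(\{k,k\};p)$ and Corollary~\ref{cor:k2k+1k} applies with $s=k$; for $j=2k+2$ they are $(\{k,k+1\};p)$, and since the shorter (vertical) legs are walled by the flat bottom, the walled form~\eqref{eqn:4-wall} of Theorem~\ref{thm:4ity theorem} applies; for $j=2k-1$ they are $(\{k-1,k\};p)$, and Theorem~\ref{thm:4ity theorem} applies directly, with the central column handled as in the derivation of Corollary~\ref{cor:k2k+1k}. Because $T(k,j,p)$ is a solid staircase with no holes, the completeness hypotheses hold automatically, and the non-overlap condition in the definition of a reflective pair holds once $k$ is small enough relative to the half-width, the finitely many exceptional small $k$ being treated as base cases.

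Each such application rewrites $\4 T(k,j,p)$, modulo $4$, as a short nonnegative combination of the tiling numbers of strictly smaller regions obtained by tiling the staircases off: the coefficient-$1$ survivors are again $T$-regions with a smaller value of $p$ or $k$, while every coefficient-$2$ term is collapsed by Lemma~\ref{lem:mod4mod2} to twice a residue modulo $2$, which is then read off from Corollaries~\ref{cor:oddity T} and~\ref{cor:oddity D}. I would run a double induction, the outer one on $p$ (mirroring the induction on $p$ in the proof of Theorem~\ref{thm:4ity theorem}) and the inner one on $k$, with the base cases $p=1$ and small $k$ evaluated by hand; in particular $T(k,2k,1)$ is exactly the $k\times 2k$ rectangle, so clause (b) at $p=1$ settles Propp's question. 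Arranging the reductions so that $D$-regions and any walled sub-configurations occur only with even coefficients keeps the whole argument inside the results already available, their parities being supplied by Corollaries~\ref{cor:oddity T} and~\ref{cor:oddity D}.

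Clauses (b), (c) and (d) should then fall out cleanly, because the corresponding clauses of Corollary~\ref{cor:oddity T} already fix the parity: a value known to be odd is $1$ or $3$ modulo $4$, and a value known to be even is $0$ or $2$. For (b) and (d) the shared-edge/walled reduction leaves a single smaller $T$-region, evaluated to $1\pmod 4$ by the inductive hypothesis, so the odd parity forces the answer to be $1$ rather than $3$. For (c) I expect both surviving contributions to be multiples of $4$: the doubled term vanishes because its residue modulo $2$ (from Corollary~\ref{cor:oddity T}(c)) is even after applying Lemma~\ref{lem:mod4mod2}, and the remaining region, being walled on both sides, contributes $0$ by Corollary~\ref{cor:double wall}. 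Hence (c) is $0\pmod 4$.

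The main obstacle is clause (a). Its mod-$2$ statement is already the subtle one, namely that $T(k,2k-1,p)$ is odd precisely when $p=1$ and $k$ is even, so I must simultaneously show $\4 T(k,2k-1,p)\equiv 0\pmod 4$ whenever $p>1$ or $k$ is odd, and decide whether the surviving odd values are $1$ or $3$. The restriction to $p=1$ enters exactly where it does in Theorem~\ref{thm:4ity theorem}: only the $k=1$ clauses of that theorem and of Corollary~\ref{cor:k2k+1k}, valid solely when the corner has $p=1$, contribute base tilings that persist modulo $4$, so I would peel off $p=1$ as its own branch and show that for $p>1$ every surviving term carries an even coefficient and therefore vanishes. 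For $p=1$ and even $k$, the delicate point is the alternation $1,3,1,3,\dots$: I expect the reduction to relate $\4 T(k,2k-1,1)$ to $\4 T(k-2,2k-5,1)$ with a single sign reversal, the intermediate odd-index region $T(k-1,2k-3,1)$ dropping out because $k-1$ is odd, which yields the recursion $f(k)\equiv 3f(k-2)\pmod 4$ with base value $f(2)=\4 T(2,3,1)=3$ (the $2\times 3$ rectangle has three tilings). This reproduces $f(k)\equiv 3$ for $k\equiv 2$ and $f(k)\equiv 1$ for $k\equiv 0$. Verifying that the intermediate regions are genuinely of the stated smaller type, that the odd-index region really contributes nothing, and that no stray doubled term disturbs the sign is where the bulk of the careful case-checking will lie.
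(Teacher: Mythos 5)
Your identification of the right tools, and the recursions they produce, is exactly the paper's: Corollary~\ref{cor:k2kk} gives $\4 T(k,2k,p) \eq \4 T(k-1,2(k-1),p)$, the walled equation~\eqref{eqn:4-wall} gives $\4 T(k,2k+2,p) \eq \4 T(k-1,2(k-1)+2,p)$ (base cases $\# T(1,2,p)=\# T(1,4,p)=1$), and Corollary~\ref{cor:k2k+1k} together with Theorem~\ref{thm:parity theorem} gives $\4 T(k,2k-1,p) \eq 3\,\4 T(k-2,2(k-2)-1,p)$ with base cases $\# T(1,1,p)=0$ and $\# T(2,3,1)=3$; your extra parity appeals in (b) and (d) are harmless but redundant, since induction on the coefficient-one recursion already pins down the residue. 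However, your justification of clause (c) would fail. The paper's reason is purely geometric: \emph{both} configurations on the right of equation~\eqref{eqn:k2k+1k} are impossible in $T(k,2k+1,p)$, because each requires a tiled strip of vertical extent $s+1=k+1$ along at least one side, while the corners of $T(k,2k+1,p)$ are walled at $s=k$ on their vertical legs (those legs terminate at the region's convex bottom corners), so the right-hand side is zero outright, for every $p$. Your substitute mechanisms do not hold up: Corollary~\ref{cor:oddity T}(c) gives the parity of $\# T(k,2k+1,p)$ itself, not of the non-$T$ subregion appearing in the doubled term of \eqref{eqn:k2k+1k}, so it cannot be fed into Lemma~\ref{lem:mod4mod2}; and Corollary~\ref{cor:double wall} is inapplicable, since it requires corners walled at $s$ along \emph{both} legs, whereas the horizontal legs of these corners end in the middle of the top edge, not at corners --- and in any case you would be applying it to the residue of a configuration that cannot even be placed.

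Your treatment of clause (a) for $p>1$ also misdiagnoses where the hypothesis $p=1$ enters. It is not true that ``for $p>1$ every surviving term carries an even coefficient'': the recursion $\4 T(k,2k-1,p) \eq 3\,\4 T(k-2,2(k-2)-1,p)$ holds uniformly in $p$, the factor $3$ arising because the coefficient-one term of \eqref{eqn:k2k+1k} leaves exactly $T(k-2,2(k-2)-1,p)$ and the doubled term reduces, via Theorem~\ref{thm:parity theorem} and Lemma~\ref{lem:mod4mod2}, to twice that \emph{same} region; no intermediate region $T(k-1,2k-3,p)$ ever appears, so there is nothing to ``drop out.'' The entire dependence on $p$ sits in the base case: $\# T(2,3,1)=3$, whereas for $p>1$ one must separately show $\4 T(2,3,p) \eq 0$, which the paper does by iteratively applying Theorem~\ref{thm:4ity theorem} to $T(2,3,p)$. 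Your proposal supplies no argument for this step, and it is the one place in clause (a) where a genuinely new computation is needed.
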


\begin{proof}
\ 
\begin{enumerate}
\item By Corollary~\ref{cor:k2k+1k} and Theorem~\ref{thm:parity theorem},
\begin{equation*}
\4 T(k,2k-1,p) \eq \4 3T(k-2, 2(k-2)-1,p).
\end{equation*}
The base cases are as follows.  The region $T(1,1,p)$ has no domino tilings for any $p$.  Similarly, iteratively applying Theorem~\ref{thm:4ity theorem} to $T(2,3,p)$ for $p>1$ shows that $\4 T(2,3,p) \eq 0$ for $p>1$.  On the other hand, $\# T(2,3,1) = 3$.
\item By Corollary~\ref{cor:k2kk}, $\4 T(k,2k,p) \eq \4 T(k-1, 2(k-1), p)$.  It is easy to see that $\# T(1,2,p) = 1$ for all $p$.
\item This follows from Corollary~\ref{cor:k2k+1k}, since both configurations in equation~\eqref{eqn:k2k+1k} are impossible.
\item Theorem~\ref{thm:4ity theorem}, in particular equation~\eqref{eqn:4-wall}, states that $\4 T(k,2k+2,p) \eq \4 T(k-1, 2(k-1) + 2, p)$.  It is straightforward to see that $\# T(1,4,p) = 1$ for all $p$.
\end{enumerate}
\end{proof}

By setting $p=1$, Corollary~\ref{cor:T}(b) addresses Propp's request in \cite{propp} for a combinatorial explanation that the number of domino tilings of a $k \times 2k$ rectangle is always $1 \bmod 4$.

\begin{cor}\label{cor:n2n}
The number of domino tilings of a $k \times 2k$ rectangle is congruent to $1 \bmod 4$ for all $k$.
\end{cor}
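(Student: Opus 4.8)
The plan is to identify the $k \times 2k$ rectangle as a member of the family $T(i,j,p)$ and then read off the congruence from Corollary~\ref{cor:T}(b). First I would note that, by the definition of $T(i,j,p)$, taking $p=1$ produces exactly $i$ centered rows, each of length $j$; hence $T(i,j,1)$ is simply the $i \times j$ rectangle. In particular, the $k \times 2k$ rectangle is precisely the region $T(k,2k,1)$.

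Given this identification, the corollary is immediate: Corollary~\ref{cor:T}(b) states that $\4 T(k,2k,p) \eq 1$ for all $k,p \ge 1$, so specializing to $p=1$ gives
\[
\4 T(k,2k,1) \eq 1,
\]
which is exactly the assertion that the $k \times 2k$ rectangle has $1 \bmod 4$ domino tilings.

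Because the statement is a direct specialization, there is no real obstacle remaining at this stage; the genuine content was already carried out in Corollary~\ref{cor:T}(b). To orient the reader, I would briefly recall its mechanism: the region $T(k,2k,p)$ is reflective and carries a reflective pair of $(\{k,k\};p)$-corners meeting along the central edge of length $2k$, so Corollary~\ref{cor:k2kk} applies and reduces $\4 T(k,2k,p)$ to $\4 T(k-1,2(k-1),p)$. Iterating this reduction down to the base case $\# T(1,2,p) = 1$ and chaining the congruences produces the value $1$. The only point requiring care in that argument---and the closest thing to a subtlety here---is verifying that the completeness and shared-edge hypotheses of Corollary~\ref{cor:k2kk} persist at every step of the induction, which holds automatically since each $T(k,2k,p)$ is hole-free and thus every intermediate corner is complete up to the required value.
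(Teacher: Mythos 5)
Your proposal is correct and matches the paper's own treatment exactly: the paper derives Corollary~\ref{cor:n2n} by observing that the $k \times 2k$ rectangle is $T(k,2k,1)$ and specializing Corollary~\ref{cor:T}(b) to $p=1$, with the substantive work (the reduction via Corollary~\ref{cor:k2kk} to the base case $\# T(1,2,p)=1$) already done in that corollary's proof. Your recap of that mechanism is also faithful to the paper.
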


Corollary~\ref{cor:oddity D} can also be refined to determine the residues modulo $4$.
 
\begin{cor}\label{cor:D}
\ 
\begin{enumerate}
\item $\4 D(k,k,p) \eq \left\{ \begin{array} {c@{\quad:\quad}l}
2 & k=2 \text{ and } p=1;\\ 0 & \text{otherwise}. \end{array} \right.$
\item $\4 D(k,k+1,p) \eq \left\{ \begin{array} {c@{\quad:\quad}l}
1 & p \text{ is odd and } \lfloor (k+2)/4 \rfloor \text{ is even};\\
3 & p \text{ is odd and } \lfloor (k+2)/4 \rfloor \text{ is odd};\\
1 & p \text{ is even and } \lfloor k/4 \rfloor \text{ is even};\\
3 & p \text{ is even and } \lfloor k/4 \rfloor \text{ is odd}. \end{array} \right.$
\item $\4 D(k,k+2,p) \eq \left\{ \begin{array} {c@{\quad:\quad}l}
2 & p=2 \text{ and } k \text{ is even};\\ 1 & p=1, \ k \text{ is even, and } \lceil k/4 \rceil \text{ is odd};\\ 3 & p=1, \ k \text{ is even, and } \lceil k/4 \rceil \text{ is even};\\ 0 & \text{otherwise}. \end{array} \right.$
\end{enumerate}
\end{cor}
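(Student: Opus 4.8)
The plan is to prove each of the three parts by induction on $k$, in close parallel with the proof of Corollary~\ref{cor:T}, using Corollaries~\ref{cor:k2kk} and~\ref{cor:k2k+1k} as the reduction engine and Theorem~\ref{thm:parity theorem} together with Lemma~\ref{lem:mod4mod2} to discard contributions that are multiples of $4$. Each region $D(k,j,p)$ is reflective, and its top edge of length $j$ (equivalently, by symmetry, its bottom edge) is shared by a reflective pair of $(\{s,s\};p)$-corners across the vertical axis of symmetry; these are essentially the corners used for the $T$-regions in Corollary~\ref{cor:T}, since $D$ is obtained by gluing two copies of a $T$-region, and completeness up to $s$ holds because the $D$-regions have no holes and are large enough. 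Whether $j$ is even or odd dictates whether Corollary~\ref{cor:k2kk} (shared edge $2s$) or Corollary~\ref{cor:k2k+1k} (shared edge $2s+1$) applies, which is precisely why the residues split according to the parity of $k$ and the offset $j-k\in\{0,1,2\}$. Before beginning, I would record the mod-$2$ information from Corollary~\ref{cor:oddity D}, which already pins each residue down to lie in $\{0,2\}$ or in $\{1,3\}$, so that the task reduces to deciding between the two surviving candidates in each case.

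For part (b) I expect each reduction to peel off the outer layer of the cap, decreasing the parameters and yielding a recursion relating $\#D(k,k+1,p)$ to $\#D(k',k'+1,p)$ for smaller $k'$, with a multiplicative factor $\epsilon\in\{1,3\}$. The factor $3$ appears exactly as in the proof of Corollary~\ref{cor:T}(a), where applying Theorem~\ref{thm:parity theorem} to a ``long'' term produced by Corollary~\ref{cor:k2k+1k} triples a surviving region. Since $3\equiv -1\pmod 4$, iterating the recursion toggles the residue between $1$ and $3$ each time a factor of $3$ is collected, and counting these toggles is exactly what the parity of $\lfloor (k+2)/4\rfloor$ (for $p$ odd) or $\lfloor k/4\rfloor$ (for $p$ even) records. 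For parts (a) and (c) the situation is simpler: after applying the appropriate corollary, I expect the surviving configurations to be either impossible, or walled so that Corollary~\ref{cor:double wall} forces a multiple of $4$, or to occur in pairs related by reflection across the axis (Remark~\ref{rem:reflections}), so that the recursion collapses to $\#D\equiv 0\pmod 4$ away from a handful of small base regions.

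The $p$-parity dependence and the exceptional residue-$2$ values are carried entirely by the base cases, which, as in the proof of Corollary~\ref{cor:T}(a), may themselves have to be computed by iterating Theorem~\ref{thm:4ity theorem} in $p$. For part (a) the recursion bottoms out at the Aztec diamonds $D(2,2,p)$: the $2\times 2$ square $D(2,2,1)$ has exactly $2$ tilings, while $D(2,2,p)$ for $p\ge 2$ has a tiling count divisible by $4$, matching the stated exception at $(k,p)=(2,1)$. For part (b) the base cases are small regions such as $D(1,2,p)$ and $D(2,3,p)$ whose residues modulo $4$ depend on the parity of $p$; this $p$-parity is propagated unchanged by the $k$-recursion, whose factors are odd, and the shift by $2$ between the arguments $\lfloor(k+2)/4\rfloor$ and $\lfloor k/4\rfloor$ is exactly this difference in starting residue. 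For part (c) the base cases split by $p$: for $p=2$ and $k$ even the base region has tiling count $\equiv 2\pmod 4$, and because every recursion factor is odd (note $2\cdot 1\equiv 2\cdot 3\equiv 2\pmod 4$) this residue $2$ is preserved for all even $k$; for $p=1$ and $k$ even the base region is odd and the $\lceil k/4\rceil$ toggling refines Corollary~\ref{cor:oddity D}(c) to the stated $1$ versus $3$; for all other $(k,p)$ the base region is a multiple of $4$.

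The main obstacle will be the bookkeeping in the reductions: for each part and each parity of $k$, I must correctly determine which of the configurations produced by Corollary~\ref{cor:k2kk} or~\ref{cor:k2k+1k} vanish (becoming multiples of $4$ after pairing reflected figures and invoking Corollary~\ref{cor:double wall} and Theorem~\ref{thm:parity theorem}) and which survive, and with what odd factor. The genuinely delicate point is establishing the residue-$2$ base cases in parts (a) and (c): asserting that a tiling count is $\equiv 2\pmod 4$ rather than $0$ is strictly stronger than the mod-$2$ statements of Corollary~\ref{cor:oddity D}, so these require either a direct enumeration of the small region in question or a careful argument isolating a single surviving factor of $2$, and pinning down the $p$-parity of these base cases correctly is the crux of the whole corollary.
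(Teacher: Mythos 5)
Your overall architecture matches the paper's proof: reduce in $k$ using Corollaries~\ref{cor:k2kk} and~\ref{cor:k2k+1k}, kill doubled terms with Theorem~\ref{thm:parity theorem} plus Lemma~\ref{lem:mod4mod2}, merge configurations related by reflection (Remark~\ref{rem:reflections}), record the mod-$2$ data of Corollary~\ref{cor:oddity D}, and push all $p$-dependence into base cases handled by separate inductions. One anticipated detail is off but harmlessly so: the machinery does not produce a one-step recursion with a factor $\epsilon\in\{1,3\}$; what it actually yields (for part (b), and analogously for (c)) is the two-term recursion $\4 D(k,k+1,p) \eq 2\4 D(k-2,k-1,p) + \4 D(k-4,k-3,p)$ for $k>4$. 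Since $\4 D(k-2,k-1,p)$ is odd, the middle term is $\eq 2$, and for odd $a$ one has $a+2 \eq 3a$, so this collapses to $\4 D(k,k+1,p) \eq 3\,\4 D(k-4,k-3,p)$ --- exactly the period-four toggling your $\lfloor (k+2)/4\rfloor$ and $\lfloor k/4\rfloor$ bookkeeping predicts. That portion of your plan would survive execution.

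The genuine gap is the base case you yourself flag as the crux: $\4 D(2,2,p)$ for $p\ge 2$, i.e.\ the Aztec diamonds, at which part (a) bottoms out and which part (c) inherits through $\4 D(2,4,p) \eq \4 D(2,2,p-1)$ and $\4 D(4,6,p) \eq \4 D(2,2,p) + \4 D(2,2,p-1) + \4 D(4,4,p-1)$. Neither of your proposed remedies works here. ``Direct enumeration of the small region'' is unavailable because this is an infinite family, one region for each $p$; and iterating Theorem~\ref{thm:4ity theorem} in $p$ with the vertical (or horizontal) axis is also unavailable, because the top edge of $D(2,2,p)$ has length $2$, so each half-corner has $s=1<2$, and any such axis intersects the squares needed to verify completeness, violating the non-overlap requirement in the definition of a reflective pair (the paper makes precisely this point). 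The paper's resolution is a specific idea missing from your proposal: view the Aztec diamond with respect to its \emph{diagonal} axis of symmetry, where it has a reflective pair of corners walled at $s$ along both sides, so Corollary~\ref{cor:double wall} applies and gives $\4 D(2,2,p) \eq 0$ for all $p\ge 2$, while $D(2,2,1)=2$ is enumerable by hand. Without this step (or an appeal to the external formula $2^{p(p+1)/2}$ of \cite{elkies}, which would defeat the combinatorial purpose), the exceptional residue-$2$ entries and the ``$0$ otherwise'' cases in parts (a) and (c) cannot be established, so the proposal as written does not close.
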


\begin{proof}
\ 
\begin{enumerate}
\item If $k$ is odd, the region $D(k,k,p)$ contains an odd number of squares, so $\# D(k,k,p) = 0$.  Thus it remains only to consider $D(2k,2k,p)$.  Applying Corollary~\ref{cor:k2kk} gives
\begin{equation}\label{eqn:D}
\4 D(2k,2k,p) \eq \4 \parbox{1in}{\epsfig{file=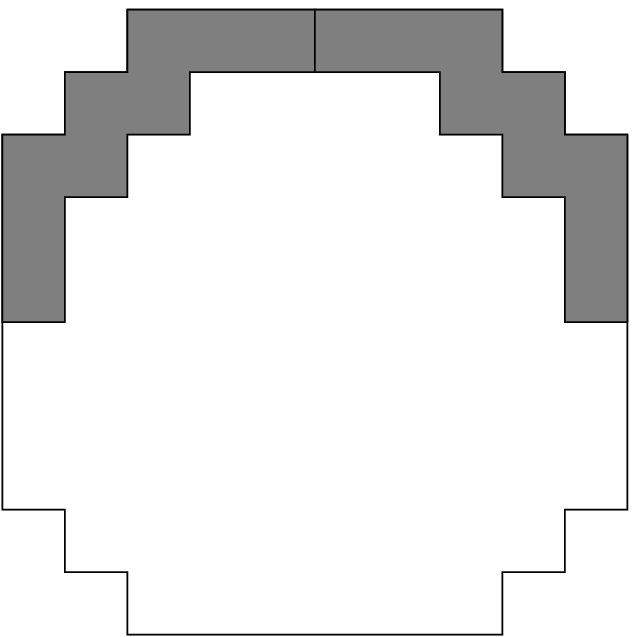,width=1in}} + \4 \parbox{1in}{\epsfig{file=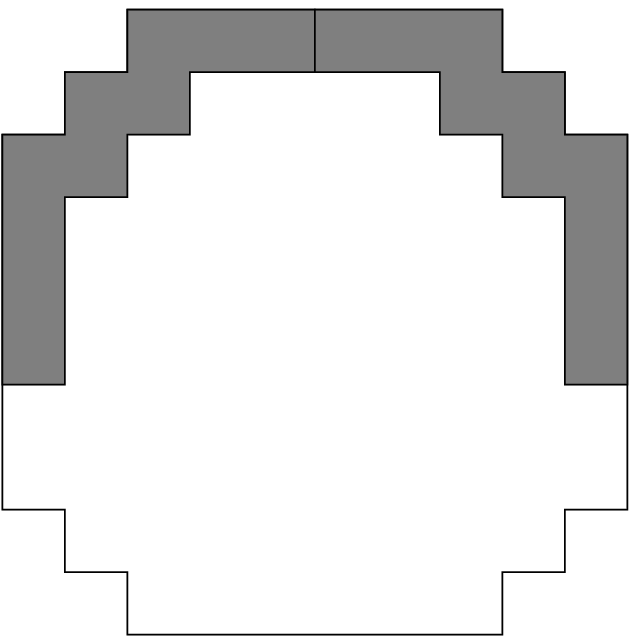,width=1in}}
\end{equation}
\noindent Now apply Corollary~\ref{cor:k2kk} to the first figure in equation~\eqref{eqn:D} and Theorem~\ref{thm:4ity theorem} to the second figure in \eqref{eqn:D}.  This gives
\begin{equation*}
\4 D(2k,2k,p) \eq 2 \4 D(2(k-1),2(k-1),p).
\end{equation*}

Therefore it is only necessary to determine $\4 D(2,2,p)$, where the region is the Aztec diamond of order $p$.  If $p \ge 2$, apply Corollary~\ref{cor:double wall} to $D(2,2,p)$, where the axis of symmetry is a diagonal of the region.  This shows that $\4 D(2,2,p) \eq 0$.  The remaining case is easy to enumerate: $D(2,2,1) = 2$. 

Note that this last case must be treated separately because any axis of symmetry in the $2 \times 2$ square intersects the regions needed to determine completeness of the corners.

\item Using Theorems~\ref{thm:parity theorem} and~\ref{thm:4ity theorem}, together with Lemma~\ref{lem:mod4mod2} and Corollary~\ref{cor:k2kk}, it is evident that
\begin{equation*}
\4 D(k,k+1,p) \eq 2\4 D(k-2,k-1,p) + \4 D(k-4,k-3,p)
\end{equation*}
\noindent for all $p$ and $k > 4$.  The cases when $k \in [1,4]$ are straightforward to check, and give the following results.
\begin{equation*}\begin{split}
\4 D(1,2,p) &= 1\\
\4 D(2,3,p) &\eq \left\{ \begin{array} {c@{\quad:\quad}l} 1 & p \text{ is even};\\ 3 & p \text{ is odd}. \end{array} \right.\\
\4 D(3,4,p) &\eq 2 + \4 D(2,3,p-1) \ \eq \ \4 D(2,3,p)\\
\4 D(4,5,p) &\eq 1 + 2 \4 D(2,3,p) \ \eq \ 3
\end{split}\end{equation*}

Therefore the value of $\# D(k,k+1,p)$ modulo 4 is described in the following table, depending on the value of $k$ and the parity of $p$.
\begin{equation*}
\begin{array}{r|cccccccccc}
k & 1 & 2 & 3 & 4 & 5 & 6 & 7 & 8 & 9 & 10\\
\hline
p \text{ odd} & 1 & 3 & 3 & 3 & 3 & 1 & 1 & 1 & 1 & 3\\
p \text{ even} & 1 & 1 & 1 & 3 & 3 & 3 & 3 & 1 & 1 & 1
\end{array}
\end{equation*}
\item If $k$ is odd, the region $D(k,k+2,p)$ has odd area, hence $\# D(k,k+2,p) = 0$.  Thus it remains to consider $D(2k,2k+2,p)$.  Once again, Theorems~\ref{thm:parity theorem} and~\ref{thm:4ity theorem} can be used together to show that
\begin{equation*}
\4 D(2k,2k+2,p) \eq 2\4 D(2k-2,2k,p) + \4 D(2k-4,2k-2,p).
\end{equation*}
The base cases here can be computed as follows, for $p \ge 2$.
\begin{equation*}\begin{split}
\4 D(2,4,p) &\eq \4 D(2,2,p-1)\\
\4 D(4,6,p) &\eq \4 D(2,2,p) + \4 D(2,2,p-1) + \4 D(4,4,p-1)
\end{split}\end{equation*}
Previous results for $\4 D(k,k,p)$ complete the proof.
\end{enumerate}
\end{proof}

Observe that Corollary~\ref{cor:D}(a) shows that the number of domino tilings of the Aztec diamond of order $p$ is a multiple of $4$ for all $p > 1$.  As mentioned above, the restriction on $p$ comes from the fact that if $p=1$, the regions checked to determine completeness of the reflective pair of corners would not be disjoint.  In fact, Elkies, Kuperberg, Larsen, and Propp show in \cite{elkies} that the exact number of domino tilings of this region is $2^{p(p+1)/2}$.

\end{document}